\documentclass[a4paper, 10pt, twoside, notitlepage]{amsart}

\usepackage{amsmath,amscd}
\usepackage{amssymb}
\usepackage{amsthm}
\usepackage{comment}
\usepackage{graphicx, xcolor}

\usepackage{mathrsfs}
\usepackage[ocgcolorlinks, linkcolor=blue]{hyperref}

%\usepackage{showkeys}
%\usepackage{refcheck}
%\usepackage[displaymath, mathlines]{lineno}
%\linenumbers

\usepackage{bm}
\usepackage{bbm}
\usepackage{url}

\usepackage[utf8]{inputenc}
\usepackage{mathtools,amssymb}
\usepackage{esint}
\usepackage{tikz}
\usepackage{dsfont}
\usepackage{relsize}
\usepackage{url}
\urlstyle{same}
\usepackage{xcolor}
\usepackage{graphicx}
\usepackage{mathrsfs}
\usepackage[shortlabels]{enumitem}
\usepackage{lineno}
\usepackage{amsmath}
\usepackage{enumitem}
\usepackage{amsthm} 
\usepackage{verbatim}
\usepackage{dsfont}
\numberwithin{equation}{section}

\allowdisplaybreaks

\mathtoolsset{showonlyrefs}

\graphicspath{{images/}}

\newtheorem{theorem}{Theorem}[section]
\newtheorem{lemma}[theorem]{Lemma}

\newtheorem{corollary}[theorem]{Corollary}

\newtheorem{proposition}[theorem]{Proposition}

\newtheorem{assumption}{Assumption}
\newtheorem{remark}[theorem]{Remark}

\title[Nonlocal elliptic equations]{Determining both leading coefficient and source in a nonlocal elliptic equation}

\author[Y.-H. Lin]{Yi-Hsuan Lin}
\address{Department of Applied Mathematics, National Yang Ming Chiao Tung University, Hsinchu, Taiwan}
\email{yihsuanlin3@gmail.com}

\newcommand{\R}{{\mathbb R}}

\newcommand{\N}{{\mathbb N}}

\newcommand {\p} {\partial}

\newcommand{\LC}{\left(}
\newcommand{\RC}{\right)}
\newcommand{\wt}{\widetilde}

%Kelvin transform
%Riesz potential
%X-ray transform

%normal operator
%normal operator of d-plane transform

%compactly supported smooth function
%smooth function
%compactly supported distribution
%distribution
%dimension
 %kernel of the Riesz potential
\newcommand{\norm}[1]{\lVert #1 \rVert}
\newcommand{\abs}[1]{\left\lvert #1 \right\rvert}%absolute value
%norm
%inner product or duality pairing
%support
%convex hull
 %divergence
\DeclareMathOperator{\supp}{supp} %support
 %distance
 %distance

%Radon measures
%weak convergence 
%weak star convergence 

\begin{document}

	\maketitle
	\begin{abstract}
		
	 In this short note, we investigate an inverse source problem associated with a nonlocal elliptic equation 
	$\LC -\nabla \cdot \sigma \nabla \RC^s u =F$ that is given in a bounded open set $\Omega\subset \R^n$, for $n\geq 3$ and $0<s<1$. We demonstrate both $\sigma$ and $F$ can be determined uniquely by using the exterior Dirichlet-to-Neumann (DN) map in $\Omega_e:=\R^n\setminus \overline{\Omega}$. The result is intriguing in that analogous theory cannot be true for the local case generally, that is, $s=1$. The key ingredients to prove the uniqueness is based on the unique continuation principle for nonlocal elliptic operators and the reduction from the nonlocal to the local via the Stinga-Torrea extension problem.
		
		\medskip
		
		\noindent{\bf Keywords.} Nonlocal elliptic operators, the Calder\'on problem, inverse source problem, simultaneous determination, Caffarelli-Silvestre extension, Stinga-Torrea extension.
		
		\noindent{\bf Mathematics Subject Classification (2020)}: 35R30; 26A33; 35J70

	\end{abstract}

	\tableofcontents

	\section{Introduction}\label{sec: introduction}

	Inverse problems for fractional partial differential equations (PDEs) have received plentiful studies in recent years. The research of fractional inverse problems was first initiated in \cite{GSU20}, where the authors recover the potential by using the exterior \emph{Dirichlet-to-Neumann} (DN) map. Moreover, in the work \cite{GSU20}, the authors constructed useful tools: \emph{Unique continuation property} (UCP) and \emph{Runge approximation property} for the fractional Laplacian $(-\Delta)^s$, for $s\in (0,1)$. Based on these two remarkable results, one may solve several challenging problems in the nonlocal setting, even they remain open for the local case $s=1$. For instance, one can determine both drift and potential terms for a fractional Schr\"odinger equation by the exterior DN map (see \cite{cekic2020calderon}). A simultaneous determination for both unknown obstacle and surrounding medium was investigated in \cite{CLL2017simultaneously}. Variable coefficients fractional Calder\'on problem was considered in \cite{GLX}.  In addition, if-and-only-if monotonicity relations are proved by \cite{harrach2017nonlocal-monotonicity,harrach2020monotonicity}. These problems are either not true or open for the local case $s=1$. So far, most of literature of nonlocal inverse problems are to recover coefficients "outside" a given nonlocal operator, such as \cite{bhattacharyya2021inverse,CMR20,CMRU20,GLX,CLL2017simultaneously,GRSU18,lin2020monotonicity,LL2020inverse,LL2022inverse,LLR2019calder,KLW2021calder,RS17}.

     Recently, the recovery of leading coefficients in nonlocal operators has been studied. In the works \cite{feizmohammadi2021fractional,feizmohammadi2021fractional_closed,ruland2023revisiting}, the authors use the local source-to-solution map to determine the metric (up to diffeomorphism) on closed Riemannian manifolds for the anisotropic fractional Calder\'on problem under slightly different regularity assumptions. More concretely, the local source-to-solution map for this type problem provides a connection between the associated Dirichlet Poisson kernel, heat kernel and the wave kernel. On the other hand, one can also consider the exterior DN map as a measurement. In the works \cite{CGRU2023reduction,LLU2023calder}, the authors introduces novel reduction formula for both fractional elliptic and parabolic equations via the \emph{Caffarelli-Silvestre} type extension. More concretely, the reduction will deduce that the nonlocal DN maps determines the corresponding local DN map. In this work, we apply this idea so that we are able to determine both leading coefficient and source by using the exterior DN map, which cannot be true for the case $s=1$ in general. Similar results have been also investigated in \cite{GU2021calder,LLU2022para} by utilizing different methods. Last but not least, we also point out that there exist several works related fractional inverse problems but in different formulation, such as \cite{RZ2022unboundedFracCald,RZ2022FracCondCounter,CRZ2022global,RZ-low-2022,CRTZ-2022,zimmermann2023inverse,LZ2023unique}. In these works, the leading nonlocal operators have been considered as the combination of fractional divergence and fractional gradient.
	
	\bigskip 
	
   \noindent $\bullet$ \textbf{Mathematical formulation.} Let $\Omega \subset \R^n$ be a bounded open set with Lipschitz boundary $\p \Omega$ for $n\geq 3$, and $s\in (0,1)$.
    Let $\sigma\in C^\infty(\R^n)$ be a scalar function satisfying 
    \begin{equation}\label{ellipticity}
    	\lambda \leq \sigma(x)\leq \lambda^{-1}, \text{ for }x\in \R^n,
    \end{equation}
    for some constant $\lambda\in (0,1)$.
     Given $F\in L^2(\Omega)$, consider the nonlocal elliptic equation 
    \begin{equation}\label{equ: main}
    	\begin{cases}
    		 (-\nabla \cdot \sigma \nabla )^s  u =F &\text{ in }\Omega, \\
    		u=f &\text{ in }\Omega_e,
    	\end{cases}
    \end{equation}
	where $\Omega_e := \R^n\setminus \overline{\Omega}$. The exterior value problem \eqref{equ: main} is well-posed by using a classical argument (via the Lax-Milgram) demonstrated in \cite[Section 3]{GLX}. More precisely, given $f\in \wt H^s(\Omega_e)$, there exists a unique solution $u\in H^s(\R^n)$ to \eqref{equ: main}. We will introduce fractional Sobolev spaces in Section \ref{sec: pre}.
    Since the equation \eqref{equ: main} is well-posed, let $W\subset \Omega_e$ be an open set with Lipschitz boundary, then we can define the (partial) DN map $\Lambda^s_{\sigma,F}$ formally by 
    \begin{equation}\label{DN map}
    	\Lambda^s_{\sigma,F}:\wt H^s(W) \to H^{-s}(W) \quad  f\mapsto \left.\LC -\nabla \cdot \sigma \nabla \RC^s u_f \right|_{W},
    \end{equation}
where $u_f$ is the unique solution to \eqref{equ: main}.

\begin{enumerate}[\textbf{(IP)}]
	\item \label{IP} \textbf{Inverse Problem.} Can we determine both $\sigma$ and $F$ in $\Omega$ by using the DN map $\Lambda_{\sigma,F}^s$?
\end{enumerate}

Before answer the above question, let us first revisit the case $s=1$, i.e., the local case. Let us consider the second order elliptic equation 
\begin{equation}\label{equ local}
 \begin{cases}
 	-\nabla \cdot \LC \sigma \nabla v \RC =F &\text{ in }\Omega, \\
 	v =g&\text{ on }\p \Omega.
 \end{cases}
\end{equation}
Let $\Lambda_{\sigma,F}$ be the (local) DN map of \eqref{equ local}, which is given by
\begin{equation}\label{DN map local}
	\Lambda_{\sigma,F}: H^{1/2}(\p \Omega)\to H^{-1/2}(\p \Omega), \quad g\mapsto \left. \sigma \nabla v_g \cdot \nu \right|_{\p \Omega},
\end{equation}
where $v_g\in H^1(\Omega)$ is the solution to \eqref{equ local}, and $\nu$ denotes the unit outer normal on $\p \Omega$.
We are also interested that whether one can determine $\sigma$ and $F$ in $\Omega$ simultaneously. Unfortunately, even we assume $\sigma$ is known a priori, we cannot determine both $\sigma$ and $F$ by using the local DN map \eqref{DN map local}. More concretely, let us consider an arbitrary function $\varphi \in C^2_c(\Omega)$, so that 
$$
\varphi|_{\p \Omega} = \left. \sigma \p_\nu \varphi\right|_{\p\Omega}=0.
$$ 
Then the function $\wt v:=v+\varphi$ satisfies 
\begin{equation}\label{equ local 1}
	\begin{cases}
	-\nabla \cdot \LC \sigma \nabla \wt v\RC = F -\nabla \cdot \LC \sigma\nabla \varphi \RC & \text{ in }\Omega,\\
\wt v=v+\varphi=g  &\text{ on }\p \Omega.
	\end{cases}
\end{equation}
By the condition $\varphi \in C^2_c(\Omega)$, one can see that the DN map of \eqref{equ local} and \eqref{equ local 1} are the same, but the source term $\nabla \cdot\LC \sigma \nabla \varphi\RC$ appears in the right hand side of \eqref{equ local 1} may not be zero in $\Omega$, which implies the source $F$ and $F-\nabla \cdot\LC \sigma \nabla \varphi\RC$ might not  equal generally. Therefore, this means the source function $F$ cannot be determined due to this natural gauge invariance even if the leading coefficient $\sigma$ is given a priori. Moreover, in recent papers, the authors and I considered how to break the gauge for both (local) \emph{semilinear} elliptic and parabolic equations, and we refer readers to \cite{LL2022uniqueness,KLL2023determining} for more details.

Hence, it is natural to ask similar problem \ref{IP} in the nonlocal setting. To solve our problem, let us pose an additional assumption:
\begin{assumption}\label{Hypothesis} 
	We assume $\sigma \in C^\infty(\R^n)$ satisfying the ellipticity \eqref{ellipticity} and $\sigma=1$ in $\Omega_e$.
\end{assumption}	

Surprisingly, unlike the local case (i.e., $s=1$), we can determine both $\sigma$ and $F$ uniquely by using the DN map $\Lambda^s_{\sigma,F}$, and our main result is stated as follows.
	
	\begin{theorem}\label{thm: main}
	 For $n\in \N$, $n\geq 3$, let $\Omega , W \subset \R^n$ be bounded open sets with Lipschitz boundaries such that $\overline{\Omega}\cap \overline{W}=\emptyset$, and $s\in (0,1)$. Let $\sigma_j \in C^\infty(\R^n)$ satisfy Assumption \ref{Hypothesis}, $F_j\in L^2(\Omega)$, and $\Lambda^s_{\sigma_j,F_j}$ be the DN map of 
	\begin{equation}\label{equ: main j=12}
		\begin{cases}
		 (-\nabla \cdot \sigma_j \nabla )^s u_j=F_j &\text{ in }\Omega, \\
			u_j=f &\text{ in }\Omega_e,
		\end{cases}
	\end{equation}
	for $j=1,2$. Suppose 
	\begin{equation}\label{same DN map}
		\Lambda^s_{\sigma_1,F_1}(f)=\Lambda^s_{\sigma_2,F_2}(f) \text{ in }W, \text{ for any }f\in C^\infty_c(W),
	\end{equation}
	then we have $\sigma_1=\sigma_2$ and $F_1=F_2$ in $\Omega$.
	\end{theorem}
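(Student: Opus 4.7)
The plan is to exploit the linearity of \eqref{equ: main} to decouple the recovery of $\sigma$ and $F$, then to attack each piece using the two hallmark tools flagged in the introduction: the Caffarelli-Silvestre/Stinga-Torrea extension for the coefficient and the nonlocal UCP for the source. First, I would split, for each $j=1,2$, the solution as $u_j=v_j+w_j$, where $v_j$ solves the source-free exterior problem with data $f$ and $w_j$ solves the zero-exterior problem with source $F_j$. Linearity then gives
\[
\Lambda^s_{\sigma_j,F_j}(f)=\Lambda^s_{\sigma_j,0}(f)+\Lambda^s_{\sigma_j,F_j}(0)\quad \text{in }W.
\]
Plugging $f=0$ into the hypothesis \eqref{same DN map} yields $\Lambda^s_{\sigma_1,F_1}(0)=\Lambda^s_{\sigma_2,F_2}(0)$, and subtracting this from \eqref{same DN map} gives $\Lambda^s_{\sigma_1,0}(f)=\Lambda^s_{\sigma_2,0}(f)$ for every $f\in C^\infty_c(W)$, so the two unknowns decouple.

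To recover $\sigma$, I would apply the Stinga-Torrea extension to the source-free problems: each $v_j$ lifts to a function $V_j$ on the half-space $\R^n\times(0,\infty)$ solving a degenerate elliptic equation with Muckenhoupt weight $y^{1-2s}$ whose coefficient depends on $\sigma_j$, and with the fractional operator realized as a weighted conormal derivative at $y=0$. Following the reduction formulas of \cite{CGRU2023reduction,LLU2023calder}, the agreement of the two source-free fractional DN maps on $W$ translates, together with Assumption \ref{Hypothesis} (which freezes the extended coefficient to $1$ in a neighbourhood of $W\times\{0\}$), into the equality of the local Cauchy data of $V_1$ and $V_2$ on a piece of hyperplane attached to $W$. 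The $(n+1)$-dimensional partial-data Calder\'on-type uniqueness theorem for the weighted elliptic operator then forces $\sigma_1=\sigma_2$ in $\Omega$.

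Having identified $\sigma:=\sigma_1=\sigma_2$, I would turn to the source term by inspecting $w:=w_1-w_2$. The residual information $\Lambda^s_{\sigma,F_1}(0)=\Lambda^s_{\sigma,F_2}(0)$ in $W$ says that $w$ and $(-\nabla\cdot\sigma\nabla)^s w$ both vanish on the open set $W\subset\Omega_e$. The nonlocal UCP for the variable-coefficient fractional operator established in \cite{GLX} then upgrades this double vanishing to $w\equiv 0$ on all of $\R^n$, and applying the operator inside $\Omega$ yields $F_1=F_2$ in $\Omega$.

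The hard part is unquestionably the middle step: one must make rigorous the transfer of the exterior nonlocal DN data on $W$ into usable local Cauchy data for the weighted elliptic extension on $\R^n\times(0,\infty)$, and then invoke a partial-data Calder\'on-type argument in the extended half-space, where Assumption \ref{Hypothesis} plays an essential role by anchoring the lifted coefficient to $1$ in a neighbourhood of the measurement set (so that the local inverse problem is actually a variable-coefficient problem only inside $\Omega\times(0,\infty)$). The first and last steps are essentially bookkeeping once this nonlocal-to-local bridge and the UCP of \cite{GLX} are in hand.
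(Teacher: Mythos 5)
Your Step 0 (decoupling by linearity: plug in $f=0$ to isolate the source response, then subtract) and Step 2 (vanishing of $w=w_1-w_2$ and $(-\nabla\cdot\sigma\nabla)^s w$ on $W$, then the UCP of \cite{GLX}, then $F_1=F_2$ in $\Omega$) match the paper's proof essentially verbatim, so those parts are sound.

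The issue is Step 1, the coefficient determination. You propose to view the Stinga--Torrea lift $V_j$ as the solution of a degenerate Calder\'on problem in the half-space $\R^{n+1}_+$, conclude that the agreement of the fractional DN maps gives equality of Cauchy data for $V_1, V_2$ on the piece $W\times\{0\}$ of the hyperplane, and then invoke an ``$(n+1)$-dimensional partial-data Calder\'on-type uniqueness theorem for the weighted elliptic operator'' to force $\sigma_1=\sigma_2$. That last step is not available: there is no off-the-shelf partial-data uniqueness result for the $A_2$-weighted operator $\nabla_{x,y}\cdot(y^{1-2s}\wt\sigma\nabla_{x,y}\,\cdot\,)$ on an unbounded half-space, with Cauchy data prescribed on a compact subset $W\times\{0\}$ of the characteristic boundary, and the unknown $\sigma$ supported far from $W$. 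Note moreover that the geometry is not even the usual partial-data configuration: $\overline{W}$ is disjoint from $\overline\Omega$, so the measurement set does not touch the region where $\sigma$ is unknown. This is precisely the kind of gap that the reduction machinery is designed to avoid.

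What the paper (and \cite{CGRU2023reduction}) actually does is quite different at this point: rather than solve a Calder\'on problem in $\R^{n+1}_+$, one integrates the extension in $y$ against the weight to build the auxiliary function $v(x)=\int_0^\infty y^{1-2s}\wt u(x,y)\,dy$, which by the decay estimates of Lemma \ref{Lemma: decay estimate} and the weighted extension PDE satisfies the \emph{non-degenerate, $n$-dimensional} conductivity equation $-\nabla\cdot(\sigma\nabla v)=d_s(-\nabla\cdot\sigma\nabla)^s u$ in $\R^n$. Using Assumption \ref{Hypothesis} and the fact that $\wt u_1^{(f)}=\wt u_2^{(f)}$ on $\Omega_e\times(0,\infty)$ (the argument of \cite[Theorem 1.2]{CGRU2023reduction}), one concludes that the Dirichlet traces $v_j|_{\partial\Omega}$ coincide and that the local DN maps $\mathbf\Lambda_{\sigma_1}$, $\mathbf\Lambda_{\sigma_2}$ on $\partial\Omega$ agree on these traces; the density statement \cite[Proposition 1.2]{CGRU2023reduction} upgrades this to equality of the local DN maps on all of $H^{1/2}(\partial\Omega)$. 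At that point the classical $n$-dimensional Sylvester--Uhlmann uniqueness for the scalar conductivity equation (with $\sigma_1=\sigma_2=1$ on $\partial\Omega$ supplied by Assumption \ref{Hypothesis}) gives $\sigma_1=\sigma_2$ in $\Omega$. So the nonlocal-to-local bridge lands in $\R^n$, not $\R^{n+1}$, and you should replace your hypothetical weighted partial-data theorem with this $n$-dimensional reduction plus the classical full-data result.
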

	
\begin{remark}
	Let us emphasize that Assumption \ref{Hypothesis} is only used in the proof of Theorem \ref{thm: main}.
\end{remark}

	The paper is organized as follows. In Section \ref{sec: pre}, we review fractional Sobolev space and some properties of nonlocal elliptic operators. In Section \ref{sec: CS extension}, we recall the Cafffarelli-Silvestre/Stinga-Torrea extension problem and associated results, which will be used in our proof. We also offer a comprehensive discussion for the corresponding local inverse source problem. Finally, we prove our main theorem in Section \ref{sec: proof}.

	\section{Preliminaries}\label{sec: pre}

	In this section, we review several useful materials for our study.
	
	\subsection{Fractional Sobolev spaces}
	
	Fractional Sobolev spaces are widely studied in many literature, and we refer readers to \cite{DNPV12} for more details. In what follows, we simply recall fundamental properties for these spaces.
	For $s\in (0,1)$, let $H^s(\R^n)=W^{s,2}(\R^{n})$ be the $L^2$-based fractional Sobolev space under the norm 
	\begin{equation*}
		\|u\|_{H^{s}(\mathbb{R}^{n})}:=\|u\|_{L^{2}(\R^{n})}+[u]_{H^{s}(\R^{n})}\label{eq:NormHs}
	\end{equation*}
	where 
	\[
	[u]_{H^{s}(A)}:=\LC\int_{A\times A}\frac{\left|u(x)-u(y)\right|^{2}}{|x-y|^{n+2s}}\, dxdy\RC^{1/2},
	\]
	denotes the seminorm, for any open set $A\subset \R^{n}$.
	
	As in \cite{GSU20}, let $A$ be a nonempty open set of $\mathbb{R}^{n}$, the space $C_{c}^{\infty}(A)$ consists all $C^{\infty}(\mathbb{R}^{n})$-smooth functions supported in $A$. For $a\in \R$, we use the following notations 
	\begin{align*}
		H^{a}(A) & :=\left\{u|_{A}: \, u\in H^{a}(\R^{n})\right\},\\
		\wt H^{a}(A) & :=\text{closure of \ensuremath{C_{c}^{\infty}(A)} in \ensuremath{H^{a}(\R^{n})}},\\
		H_{0}^{a}(A) & :=\text{closure of \ensuremath{C_{c}^{\infty}(A)} in \ensuremath{H^{a}(A)}},
	\end{align*}
	to denote various fractional Sobolev spaces.
	Meanwhile, the space $H^{a}(A)$ is complete in the sense
	\[
	\|u\|_{H^{a}(A)}:=\inf\left\{ \|v\|_{H^{a}(\mathbb{R}^{n})}: \, v\in H^{a}(\mathbb{R}^{n})\mbox{ and }v|_{A}=u\right\} .
	\]
	In particular, as $a=s\in (0,1)$, let us denote $H^{-s}(A)$ to be the dual space of $\wt H^s(A)$, so that $H^{-s}(A)$ can be characterized by  
	\[
	H^{-s}(A) = \left\{ u|_{A} : u \in H^{-s}(\mathbb{R}^{n}) \right\} \quad \text{with} \quad \inf_{v\in H^{s}(\mathbb{R}^{n}), \ v|_{A}=u} \| v \|_{H^{s}(\mathbb{R}^{n})},
	\]
	
	\subsection{Nonlocal elliptic operators}
	
	Let $\mathcal{L}:=-\nabla \cdot \LC \sigma \nabla \RC$ be a second order uniformly elliptic operator, then  the nonlocal elliptic operator $\mathcal{L}^s= (-\nabla \cdot \sigma \nabla)^s$ can be defined via the forthcoming procedures.
	Let us first recall the representation formula 
	$$
	\lambda^{s}=\dfrac{1}{\Gamma(-s)}\int_{0}^{\infty}\LC e^{-t\lambda}-1\RC \frac{dt}{t^{1+s}},
	$$
	for $s\in(0,1)$, where $\Gamma(-s):=-\frac{\Gamma(1-s)}{s}$. Here $\Gamma(\cdot )$
	denotes the Gamma function. Hence, for $s\in(0,1)$, the nonlocal operator $\mathcal{L}^s$ can be defined by 
	\begin{equation}
		\mathcal{L}^{s}:=\int_{0}^{\infty}\lambda^{s}\,dE_{\lambda}=\frac{1}{\Gamma(-s)}\int_{0}^{\infty}\left(e^{-t\mathcal{L}}-\mbox{Id}\right)\,\frac{dt}{\tau^{1+s}},\label{eq:1111}
	\end{equation}
	where $\mathrm{Id}$ stands for the identity map.
	Here 
	\begin{equation}
		e^{-t\mathcal{L}}:=\int_{0}^{\infty}e^{-t \lambda}\,dE_{\lambda}\label{eq:heat-semigroup}
	\end{equation}
	is a bounded self-adjoint operator in $L^{2}(\mathbb{R}^{n})$ for
	each $t \ge0$, where $\left\{ E_\lambda \right\}$ denotes the spectral resolution of $\mathcal{L}$, and each $E_\lambda$ is a projection in $L^2(\R^n)$. The family $\left\{e^{-t\mathcal{L}}\right\}_{t \ge0}$
	is the heat semigroup corresponding to $\mathcal{L}=-\nabla \cdot \LC \sigma \nabla \RC$.

	Moreover, let us recall the unique continuation principle (UCP in short) for $\mathcal{L}^s$, which was proved in \cite[Theorem 1.2]{GLX}.
	
	\begin{proposition}[Unique continuation principle]\label{Prop: UCP}
		For $n\in \N$, let $\sigma\in C^\infty(\R^n)$ satisfy \eqref{ellipticity}. Let $u\in H^s(\R^n)$ fulfill $u=\mathcal{L}^su=0$ in some open set $W\subset \R^n$. Then $u\equiv 0 $ in $\R^n$.
	\end{proposition}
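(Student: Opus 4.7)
The plan is to reduce the nonlocal problem to a local degenerate elliptic problem in one higher dimension via the Stinga--Torrea extension, and then to invoke a weighted unique continuation principle for the extended equation. Given $u\in H^s(\R^n)$, one constructs an extension $U:\R^n\times(0,\infty)\to \R$ solving
\begin{equation*}
	\partial_{yy} U + \frac{1-2s}{y}\partial_y U - \mathcal{L}_x U = 0 \quad\text{in }\R^n\times(0,\infty),
\end{equation*}
or equivalently in divergence form
\begin{equation*}
	\nabla_{x,y}\cdot\LC y^{1-2s}\mathbf{A}(x)\nabla_{x,y} U\RC = 0,
\end{equation*}
where $\mathbf{A}(x)$ is the block-diagonal matrix with $\sigma(x)\id$ in the $x$-block and $1$ in the $y$-block. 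Uniform ellipticity of $\mathbf{A}$ is inherited from \eqref{ellipticity}, and $y^{1-2s}$ is a Muckenhoupt $A_2$ weight. The extension can be built using the heat semigroup \eqref{eq:heat-semigroup}, and it satisfies the trace identity $U(\cdot,0)=u$ together with the Neumann-type identity
\begin{equation*}
	-\lim_{y\to 0^+}y^{1-2s}\partial_y U(x,y) = c_s\,\mathcal{L}^s u(x),
\end{equation*}
for an explicit nonzero constant $c_s$ depending only on $s$.

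Under the hypothesis $u=\mathcal{L}^s u =0$ in $W$, both the Dirichlet datum $U|_{W\times\{0\}}$ and the weighted co-normal datum $y^{1-2s}\partial_y U|_{W\times\{0\}}$ vanish. The next step is to apply the unique continuation principle for solutions of the degenerate elliptic equation across the flat portion of the boundary $W\times\{0\}$. Specifically, one extends $U$ by zero across $W\times\{0\}$ and verifies that the extended function remains a weak solution of the degenerate equation in a neighborhood of some point of $W\times\{0\}$; then, a unique continuation statement for $A_2$-weighted elliptic operators with smooth coefficients forces $U$ to vanish in a small half-ball.

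This weighted UCP is the principal technical hurdle, and it is usually established via Carleman estimates in the $y^{1-2s}$-weighted setting or an Almgren-type monotonicity formula adapted to the Stinga--Torrea geometry. Once $U$ is shown to vanish on an open subset of $\R^n\times(0,\infty)$, the classical weak unique continuation property for second-order elliptic equations with smooth coefficients (valid in the open upper half-space, where the weight $y^{1-2s}$ is smooth and strictly positive) propagates the vanishing of $U$ throughout the connected set $\R^n\times(0,\infty)$. Taking the trace at $y=0$ yields $u\equiv 0$ in $\R^n$, completing the argument.
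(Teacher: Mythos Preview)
Your proposal is correct and follows the standard route: Stinga--Torrea extension, vanishing Cauchy data on $W\times\{0\}$, weighted unique continuation for the degenerate $A_2$-elliptic equation (via Carleman estimates or Almgren monotonicity), and propagation through the connected upper half-space. The paper itself does not supply a proof of this proposition; it simply recalls the statement and cites \cite[Theorem 1.2]{GLX}, whose argument is precisely the one you outline. So your approach matches the referenced proof, and there is nothing to compare.
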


	Note that the UCP holds for general smooth coefficients $\sigma$ defined in $\R^n$, and we do not need to use Assumption \ref{Hypothesis} in the proof of Proposition \ref{Prop: UCP}.

	\subsection{The forward problem and DN map}
	
	We want to study the well-posedness of \eqref{equ: main}. Applying the results in \cite[Section 3]{GLX}, it is known that the exterior problem \eqref{equ: main} is well-posed, and the DN map can be defined by the bilinear form 
	\begin{equation}\label{bilinear}
      \begin{split}
      	B(u, w)&:=\left\langle \mathcal{L}^s u, w \right\rangle  \\
      	&\ = \frac{1}{2}\int_{\R^n\times \R^n} \LC u(x)-u(z)\RC \LC w(x)-w(z) \RC K_s(x,z)\, dxdz,
      \end{split}
	\end{equation}
	where $K_s(x,z)$ is given by
	\begin{equation}
		K_s(x,z):=\frac{1}{\Gamma(-s)} \int_0^\infty p_t(x,z)\frac{dt}{t^{1+s}}.
 	\end{equation}
	Here $p_t(x,z)$ denotes the symmetric heat kernel for $\mathcal{L}$ satisfying 
	\begin{equation}
		\LC e^{-t\mathcal{L}} f\RC (x)=\int_{\R^n} p_t(x,z)f(z)\, dz, \text{ for }x\in \R^n, \ t>0,
	\end{equation}
	i.e., 
	\begin{equation}
	 \begin{cases}
	 	\LC \p _t+\mathcal{L}\RC  \LC e^{-t\mathcal{L}} f \RC   =0 &\text{ for }(x,t)\in \R^{n} \times (0,\infty), \\
	 	\LC e^{-t\mathcal{L}} f\RC (x,0)=f(x) &\text{ for }x\in \R^n.
	 \end{cases}
	\end{equation}
	Recall that the heat kernel $p_t(x,z)$ admits the pointwise estimate (for instance, see \cite{Davies90})
	\begin{equation}\label{heat kernel estimate}
		c_1 e^{-\alpha_1\frac{|x-z|^2}{t}}t^{-\frac{n}{2}}\leq p_t (x,z)\leq c_2 e^{-\alpha_2\frac{|x-z|^2}{t}}t^{-\frac{n}{2}}, \text{ for }x,z\in \R^n,
	\end{equation}
	for some positive constants $c_1,c_2,\alpha_1$ and $\alpha_2$. With the aid of pointwise estimate \eqref{heat kernel estimate} for heat kernel, we can obtain pointwise estimate for $K_s(x,z)$ such that 
	\begin{equation}
		\frac{C_1}{\abs{x-z}^{n+2s}}\leq K_s(x,z)\leq \frac{C_2}{\abs{x-z}^{n+2s}}, \text{ for }x,z\in \R^n,
	\end{equation}
	for some constants $C_1,C_2>0$.
	The above estimate will help us to show the well-posedness of \eqref{equ: main} (see \cite[Section 3]{GLX} for detailed arguments).

	Therefore, the exterior problem \eqref{equ: main} is well-posed, then the bilinear form \eqref{bilinear} can be used to defined the DN map
	\begin{equation}
		\left\langle \Lambda_{\sigma,F}^s f, g\right\rangle = B\LC u_f, w_g\RC,
	\end{equation}
	where $u_f\in H^s(\R^n)$ is the solution to \eqref{equ: main} and $w_g\in H^s(\R^n)$ can be any representative of $g$. Using a duality characterization, one can see that the DN map \eqref{DN map} is defined rigorously.

	\section{Local equations}\label{sec: CS extension}
	
	Let us consider the extension problem and related properties in this section.

	\subsection{The extension problem}
	
	By the pioneer work of \cite{CS07}, it is known that the fractional Laplacian can be characterized by a degenerate extension problem with $A_2$-Muckenhoupt weight. This is called the \emph{Caffarelli-Silvestre extension} in the literature. Moreover, the result was extended to general (variable) coefficients elliptic operator by Stinga-Torrea \cite{ST10}, so that the nonlocal operator $(-\nabla \cdot \sigma \nabla)^s $ can also be characterized by utilizing the extension problem 
	  \begin{equation}\label{extension problem}
	  	 \begin{cases}
	  	 	\nabla_{x,y}\cdot \LC y^{1-2s}\wt \sigma \nabla_{x,y}\wt u \RC =0 & \text{ in }\R^{n+1}_+, \\
	  	 	\wt u(x,0)=u(x) &\text{ in }\R^n,
	  	 \end{cases}
	  \end{equation}
	  where $\wt \sigma$ is written as  
	  	\begin{align}\label{tilde sigma(x)}
	  	\wt \sigma (x)=\left( \begin{matrix}
	  		\sigma(x)\mathrm{I}_n& 0\\
	  		0 & 1 \end{matrix} \right).
	  \end{align}
	 Here  $\mathrm{I}_n$ is  $n\times n$ identity matrix. Then the nonlocal elliptic operator $(-\nabla \cdot \sigma \nabla )^s$ can be written as  
	  \begin{equation}
	  	-\lim_{y\to 0}y^{1-2s}\p_y \wt u =d_s (-\nabla \cdot \sigma \nabla)^s u(x) \text{ in }\R^n,
 	  \end{equation}
	for some constant $d_s$ depending only on $s\in (0,1)$.
	
	By using \cite[Lemma 6.2]{CGRU2023reduction}, it is known that solutions $\wt u$ of the extension problem \eqref{extension problem} has nice decay properties. For the sake of completeness, let us recall the result as follows.

	\begin{lemma}\label{Lemma: decay estimate}
		Let $\Omega, W\subset \R^n$ be bounded open sets with Lipschitz boundaries, for $n\in \N$. Given $s\in (0,1)$, let $u\in H^s(\R^{n})$ with $\supp(u)\subset \overline{\Omega\cup W}$, and $\wt u$ be the corresponding solution of the extension problem \eqref{extension problem}. Then the function $\wt u(x,y)$ satisfies the following decay estimates:
		\begin{enumerate}[(a)]
			\item\label{item a decay x} \begin{equation}\label{decay estimate in x}
				\begin{split}
					\left| \wt u  (x,y)\right| \leq C y^{-n}\norm{u}_{L^1(\R^n)}, \quad \left| \nabla \wt u(x,y)\right| \leq C y^{-n-1}\norm{u}_{L^1(\R^n)}
				\end{split}
			\end{equation}
			for any $(x,y)\in \R^{n+1}_+:=\R^n \times (0,\infty)$, where $C>0$ is a constant independent of $u$ and $y>0$.

			\item\label{item c decay y} For $1\leq p,q,r \leq \infty$ with $1+\frac{1}{r}=\frac{1}{p}+\frac{1}{q}$, $\wt u$ satisfies 
			\begin{equation}\label{decay estimate in y}
				\begin{split}
					\left\|  \wt u(\cdot,y)\right\|_{L^r(\R^{n})} \leq C y^{\frac{n}{p}-n}\norm{u}_{L^q(\R^n)},
				\end{split}
			\end{equation}
			and 
			\begin{equation}\label{decay estimate deri}
				\left\|\nabla_{x,y}\wt u(\cdot , y)	\right\|_{L^r(\R^{n})}  \leq C y^{\frac{n}{p}-n-1}\norm{u}_{L^q(\R^n)},
			\end{equation}
			where $C>0$ is a constant independent of $u$ and $y>0$.
			
		\end{enumerate}
	\end{lemma}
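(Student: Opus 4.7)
The plan is to produce the estimates from an explicit Poisson-type representation for the solution $\wt u$ of the Stinga--Torrea extension, combined with the two-sided Gaussian bound \eqref{heat kernel estimate} on the heat kernel $p_t$. Using the spectral/subordination formula for $\mathcal L=-\nabla\cdot\sigma\nabla$, one writes
\begin{equation*}
\wt u(x,y)=\frac{y^{2s}}{4^s \Gamma(s)}\int_0^\infty e^{-y^2/(4t)}\LC e^{-t\mathcal L}u\RC(x)\,\frac{dt}{t^{1+s}}=\int_{\R^n}P^\sigma_y(x,z)\,u(z)\,dz,
\end{equation*}
where $P^\sigma_y(x,z)=\frac{y^{2s}}{4^s\Gamma(s)}\int_0^\infty e^{-y^2/(4t)}p_t(x,z)\,\frac{dt}{t^{1+s}}$. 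Inserting the upper bound from \eqref{heat kernel estimate} and performing the substitution $t=(y^2/4+\alpha_2|x-z|^2)/\tau$ collapses the $t$-integral into $\Gamma(n/2+s)$, yielding the global pointwise bound
\begin{equation*}
P^\sigma_y(x,z)\leq C\,\frac{y^{2s}}{(y^2+|x-z|^2)^{(n+2s)/2}}.
\end{equation*}

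For part \eqref{item a decay x}, the supremum over $z$ is attained at $z=x$ and equals $Cy^{-n}$, so $|\wt u(x,y)|\leq \|P^\sigma_y(x,\cdot)\|_{L^\infty}\|u\|_{L^1}\leq Cy^{-n}\|u\|_{L^1}$. The gradient estimate comes from differentiating under the integral: the $\p_y$-derivative acts on explicit factors, while the tangential derivative uses the standard first-order heat-kernel bound $|\nabla_x p_t(x,z)|\leq Ct^{-1/2-n/2}e^{-\alpha|x-z|^2/t}$ (valid for smooth uniformly elliptic $\sigma$, as in \cite{Davies90}). The same substitution then gives
\begin{equation*}
|\nabla_{x,y}P^\sigma_y(x,z)|\leq C\,\frac{y^{2s}}{(y^2+|x-z|^2)^{(n+2s+1)/2}},
\end{equation*}
whose sup over $z$ is $Cy^{-n-1}$, recovering \eqref{decay estimate in x}.

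For part \eqref{item c decay y}, the change of variables $z=x+yw$ in the pointwise bound on $P^\sigma_y$ gives
\begin{equation*}
\|P^\sigma_y(x,\cdot)\|_{L^p(\R^n)}\leq Cy^{n/p-n}
\end{equation*}
uniformly in $x$; by the symmetry $p_t(x,z)=p_t(z,x)$ inherited from self-adjointness, the same bound holds in the other slot. A Schur/generalized-Young inequality for integral kernels whose $L^p$ norms are controlled in both variables then delivers the operator bound $L^q\to L^r$ with the prescribed exponent $y^{n/p-n}$ whenever $1+1/r=1/p+1/q$. The gradient estimate \eqref{decay estimate deri} follows identically from the analogous $L^p$ bound for $\nabla_{x,y}P^\sigma_y$, which by scaling loses precisely one extra power of $y$.

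The main obstacle is the subordination bookkeeping that turns \eqref{heat kernel estimate} (and its derivative counterpart) into the clean pointwise bound on $P^\sigma_y$ above; once that substitution is carried out, parts \eqref{item a decay x} and \eqref{item c decay y} are standard kernel-operator estimates. No use of the exterior-unity Assumption \ref{Hypothesis} is needed, only smoothness and uniform ellipticity of $\sigma$ on $\R^n$.
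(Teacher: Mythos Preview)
The paper does not actually prove this lemma: it simply quotes the statement from \cite[Lemma~6.2]{CGRU2023reduction} and refers the reader there. So there is no ``paper's own proof'' to compare against, only the cited reference.

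Your approach is correct and is essentially the standard argument behind such results. The subordination representation for $\wt u$ is exactly the Stinga--Torrea formula from \cite{ST10}, and the substitution you describe does turn the Gaussian upper bound \eqref{heat kernel estimate} into the pointwise Poisson-type bound $P^\sigma_y(x,z)\leq C y^{2s}(y^2+|x-z|^2)^{-(n+2s)/2}$. Since this majorant depends only on $|x-z|$, part \eqref{item c decay y} follows directly from classical Young's convolution inequality (you do not even need the Schur test or the symmetry of $p_t$). One small remark: for $\p_y P^\sigma_y$ the product rule produces two terms, $y^{2s-1}(y^2+|x-z|^2)^{-(n+2s)/2}$ and $y^{2s+1}(y^2+|x-z|^2)^{-(n+2s+2)/2}$, which are not both dominated by the single expression you wrote; however, each term separately has the correct $L^\infty$ and $L^p$ scaling $y^{n/p-n-1}$, so the conclusions \eqref{decay estimate in x} and \eqref{decay estimate deri} still follow. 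The use of the first-order heat-kernel gradient bound for smooth uniformly elliptic $\sigma$ is legitimate and standard.
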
	
	
	Similar decay results have been investigated in the nonlocal parabolic case in our earlier work \cite{LLU2023calder}.

	\subsection{Relation between the nonlocal and local}\label{subsection:local-nonlocal}
	
	Motivated by \cite{CGRU2023reduction,LLU2023calder,ruland2023revisiting}, one can use the extension problem to find some equations to prove our result.
	Let us consider an auxiliary function 
	\begin{equation}
		v(x):=\int_0^\infty y^{1-2s}\wt u(x,y)\, dy,
	\end{equation}
	where $\wt u(x,y)$ solves \eqref{extension problem}.
	By using \eqref{extension problem}, we can obtain 
	\begin{equation}
		\begin{split}
			0&= \int_0^\infty  y^{1-2s}\nabla \cdot \LC \sigma \nabla \wt u \RC dy + \int_0^\infty \p _y \LC y^{1-2s}\p_y \wt u \RC dy  \\
			&= \nabla \cdot \LC \sigma \nabla v \RC +\lim_{y\to \infty}y^{1-2s}\p_y \wt u -\lim_{y\to 0}y^{1-2s}\p_y \wt u \\
			 &= \nabla \cdot \LC \sigma \nabla v \RC  -\lim_{y\to 0}y^{1-2s}\p_y \wt u,
		\end{split}
	\end{equation}
	where we used the decay estimate \eqref{decay estimate deri} as $r=\infty$ and $p=q=2$ such that $\lim_{y\to \infty}y^{1-2s}\p_y \wt u=0$.
	This implies
	\begin{equation}
		\begin{split}
			-\nabla \cdot \LC \sigma \nabla v \RC  = -\lim_{y\to 0}y^{1-2s}\p_y \wt u =d_s\LC -\nabla \cdot \sigma \nabla \RC^s u \text{ in }\R^n,
		\end{split}
	\end{equation}
	Particularly, combining \eqref{equ: main} in the region $\Omega$, we obtain 
     \begin{equation}\label{equ: nonlocal to local}
        \begin{cases}
        		-\nabla \cdot \LC \sigma \nabla v \RC  =d_s F & \text{ in }\Omega, \\
        		v=\left. \int_0^\infty y^{1-2s}\wt u_f(x,y) \, dy\right|_{\p \Omega} &\text{ on }\p\Omega.
        \end{cases}
     \end{equation}
     As we mentioned before, we want to determine $\sigma$ and $F$ simultaneously.

	In particular, as $F\equiv 0$ in $\Omega$, it is known that the nonlocal DN map determines the local DN map. More concretely, by \cite[Theorem 1]{CGRU2023reduction} shows that the nonlocal DN map $\Lambda_{\sigma,0}^s$ determines the local DN map $\Lambda_{\sigma,0}$. Hence, if we consider the zero exterior data $f=0$ into \eqref{equ: main}, and we denote the solution as $u^{(0)}\in H^s(\R^n)$, then we have 
	\begin{equation}\label{equ: main f=0}
		\begin{cases}
			(-\nabla \cdot \sigma \nabla )^s  u^{(0)} =F &\text{ in }\Omega, \\
			u^{(0)}=0 &\text{ in }\Omega_e.
		\end{cases}
	\end{equation}
	Next, by taking the function $\mathbf{u}^{(f)} :=u^{(f)}-u^{(0)}$, where $u^{(f)} , u^{(0)}\in H^s(\R^n)$ are the solutions to \eqref{equ: main} and \eqref{equ: main f=0}, respectively, then $\mathbf{u}^{(f)} \in H^s(\R^n)$ is the solution to
	\begin{equation}\label{equ: main F=0}
		\begin{cases}
			(-\nabla \cdot \sigma \nabla )^s  \mathbf{u}^{(f)} =0 &\text{ in }\Omega, \\
			\mathbf{u}^{(f)}=f &\text{ in }\Omega_e.
		\end{cases}
	\end{equation}
	Let $\mathbf \Lambda^s_{\sigma}$ nonlocal DN map of the above equation, and one can see that 
	\begin{equation}
	\mathbf \Lambda^s_{\sigma}(f)\equiv \Lambda^s_{\sigma,0}(f) =	\Lambda^s_{\sigma,F}(f) -	\Lambda^s_{\sigma,F}(0), 
	\end{equation}
	where the right hand side of the above identity is given a priori since we know the information of nonlocal DN map $\Lambda^s_{\sigma,F}$. Applying \cite[Theorem 1]{CGRU2023reduction}, one can conclude that $\mathbf \Lambda_{\sigma}^s$ determines the local DN map $\mathbf \Lambda_\sigma$, where $\mathbf \Lambda_\sigma$ is the DN map of 
	\begin{equation}\label{equ: local F=0}
		\begin{cases}
			-\nabla \cdot \LC  \sigma \nabla   \wt v\RC  =0 &\text{ in }\Omega, \\
			\wt v=g &\text{ on }\p \Omega.
		\end{cases}
	\end{equation}
	Since $\sigma$ is a scalar function satisfies Assumption \ref{Hypothesis} so that $\sigma_1=\sigma_2=1$ on $\p \Omega$, by using the seminal work of the global uniqueness of \eqref{equ: local F=0} (for example, see \cite{SU87}), then the conductivity $\sigma$ in the equation \eqref{equ: local F=0} can be determined uniquely by its DN map. Combining with previous discussions and explanations in Section \ref{sec: introduction}, we can immediately conclude the next result:
	
\begin{corollary}\label{Cor: local}
	 For $n\in \N$, $n\geq 3$, let $\Omega \subset \R^n$ be a bounded open set with Lipschitz boundary $\p \Omega$. Let $\sigma_j\in C^2(\overline{\Omega})$ satisfy \eqref{ellipticity} with $\sigma_1=\sigma_2$ on $\p \Omega$, and $F_j \in L^2(\Omega)$, for $j=1,2$. Consider  $\Lambda_{\sigma_j,F_j}$ to be the local (full) DN map of 
	  \begin{equation}\label{equ: local j=1,2}
	 	\begin{cases}
	 		-\nabla \cdot \LC \sigma_j \nabla v_j \RC  = F_j & \text{ in }\Omega, \\
	 		v_j=g &\text{ on }\p\Omega,
	 	\end{cases}
	 \end{equation}
	 for $j=1,2$. Suppose 
	 \begin{equation}\label{DN map same local}
	 	\Lambda_{\sigma_1,F_1}(g)=\Lambda_{\sigma_2,F_2}(g), \text{ for any }g\in H^{1/2}(\p \Omega),
	 \end{equation}
	 then $\sigma_1=\sigma_2$ in $\Omega$, and there exists $\varphi \in C^2_0(\Omega)$ such that 
	 \begin{equation}\label{gauge}
	 	F_2=F_1-\nabla \cdot \LC \sigma_1 \nabla \varphi\RC \text{ in }\Omega.
	 \end{equation}
\end{corollary}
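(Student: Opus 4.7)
The plan is to decouple the problem by linearity into a source-free piece, to which the classical Calder\'on uniqueness theorem applies, and a boundary-free piece, which then encodes the admissible gauge for the source. For each $j=1,2$ and each $g\in H^{1/2}(\p\Omega)$, I would write $v_j = \mathbf{v}_j^{(g)}+v_j^{(0)}$, where $\mathbf{v}_j^{(g)}\in H^1(\Omega)$ solves $-\nabla\cdot(\sigma_j\nabla \mathbf{v}_j^{(g)})=0$ in $\Omega$ with $\mathbf{v}_j^{(g)}=g$ on $\p\Omega$, and $v_j^{(0)}\in H^1_0(\Omega)$ solves $-\nabla\cdot(\sigma_j\nabla v_j^{(0)})=F_j$ in $\Omega$ with zero boundary data. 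By linearity the DN map splits as
\begin{equation*}
 \Lambda_{\sigma_j,F_j}(g) = \Lambda_{\sigma_j,0}(g) + \Lambda_{\sigma_j,F_j}(0),
\end{equation*}
where $\Lambda_{\sigma_j,0}$ denotes the usual conductivity DN map associated with the source-free problem.

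First I would recover the conductivity. Setting $g=0$ in \eqref{DN map same local} gives $\Lambda_{\sigma_1,F_1}(0)=\Lambda_{\sigma_2,F_2}(0)$, and subtracting this from \eqref{DN map same local} yields $\Lambda_{\sigma_1,0}(g)=\Lambda_{\sigma_2,0}(g)$ for every $g\in H^{1/2}(\p\Omega)$. Since $n\ge 3$, $\sigma_j\in C^2(\overline{\Omega})$ and $\sigma_1=\sigma_2$ on $\p\Omega$, the Sylvester--Uhlmann theorem \cite{SU87} applies and gives $\sigma_1=\sigma_2=:\sigma$ in $\Omega$. This is the step that carries essentially all of the analytical content of the argument; everything else is bookkeeping.

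With $\sigma_1=\sigma_2=\sigma$ in hand, I would define $\varphi:=v_2^{(0)}-v_1^{(0)}\in H^1_0(\Omega)$ and subtract the two equations satisfied by $v_j^{(0)}$ to obtain
\begin{equation*}
 -\nabla\cdot(\sigma\nabla\varphi) = F_2-F_1 \quad\text{in }\Omega,
\end{equation*}
which, since $\sigma_1=\sigma$, rearranges into the claimed gauge identity \eqref{gauge}. The boundary condition $\varphi|_{\p\Omega}=0$ is built into the construction, while $\sigma\p_\nu\varphi|_{\p\Omega}=0$ follows from the already-established equality $\Lambda_{\sigma,F_1}(0)=\Lambda_{\sigma,F_2}(0)$. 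Standard elliptic regularity for the divergence-form equation (using $\sigma\in C^2(\overline{\Omega})$ and $F_j\in L^2(\Omega)$) then places $\varphi$ in the regularity class $C^2_0(\Omega)$ stated in the corollary, modulo a mild smoothness upgrade on $\p\Omega$.

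The only genuine obstacle is therefore invoking the Sylvester--Uhlmann theorem; the remainder is a clean linearity/subtraction computation. Crucially, because the corollary is formulated entirely in the local setting, the Stinga--Torrea reduction of Section \ref{subsection:local-nonlocal} plays no role here: it is only needed when starting from the nonlocal DN map $\Lambda^s_{\sigma,F}$, which reduces \emph{to} the local DN map handled by the present corollary.
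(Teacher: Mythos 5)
Your proof follows the same route the paper intends: split the solution by linearity into a source-free part (handled by Sylvester--Uhlmann) and a zero-boundary part (which produces the gauge $\varphi$), and this matches the sketch the paper gives just before the corollary. The decomposition $\Lambda_{\sigma_j,F_j}(g)=\Lambda_{\sigma_j,0}(g)+\Lambda_{\sigma_j,F_j}(0)$, the subtraction to isolate $\Lambda_{\sigma_1,0}=\Lambda_{\sigma_2,0}$, the invocation of \cite{SU87}, and the definition $\varphi:=v_2^{(0)}-v_1^{(0)}$ together with the identity $-\nabla\cdot(\sigma\nabla\varphi)=F_2-F_1$ and $\sigma\p_\nu\varphi|_{\p\Omega}=0$ are all exactly the ingredients the paper is drawing on in Section~\ref{sec: introduction} and Section~\ref{subsection:local-nonlocal}.

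The one place where you should be more careful is the final regularity assertion. With $F_j\in L^2(\Omega)$ and $\sigma\in C^2(\overline\Omega)$, elliptic theory gives $\varphi\in H^2_{\mathrm{loc}}(\Omega)$ (and $H^2(\Omega)\cap H^1_0(\Omega)$ on a smooth enough domain), but for $n\ge 3$ the space $H^2$ does not embed into $C^2$; reaching $C^2$ would require Schauder-type hypotheses such as $F_j\in C^{0,\alpha}$. So the claim that ``standard elliptic regularity places $\varphi$ in $C^2_0(\Omega)$ modulo a mild smoothness upgrade on $\p\Omega$'' misdiagnoses the obstacle: the deficiency is in the regularity of $F_j$, not the boundary. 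In fairness, the paper's statement of $\varphi\in C^2_0(\Omega)$ is itself imprecise under the stated hypothesis $F_j\in L^2(\Omega)$, and the natural conclusion from your construction is $\varphi\in H^1_0(\Omega)$ with $\sigma\p_\nu\varphi|_{\p\Omega}=0$ in $H^{-1/2}(\p\Omega)$; the $C^2_0$ class is recoverable only under stronger assumptions on $F_j$. Everything else in your argument is correct and is, in substance, the paper's proof made explicit.
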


The above result states that the condition \eqref{DN map same local} can imply the uniqueness of the scalar conductivity. However the source $F$ cannot be determined due to the natural gauge invariance \eqref{gauge}.

	\section{Proof of Theorem \ref{thm: main}}\label{sec: proof}
	
Now, we can prove Theorem \ref{thm: main}.

\begin{proof}[Proof of Theorem \ref{thm: main}]
	We split the proof into two steps:
	
	\medskip
	
	{\it Step 1. Unique determination of the leading coefficient.}
	
	\medskip
	
 \noindent Let $u_j^{(f)}, u_j^{(0)}\in H^s(\R^n)$ be the solutions to 
 	\begin{equation}
 	\begin{cases}
 		(-\nabla \cdot \sigma_j \nabla )^s u_j^{(f)}=F_j &\text{ in }\Omega, \\
 		u_j^{(f)}=f &\text{ in }\Omega_e,
 	\end{cases}
 \end{equation}
 and 
 \begin{equation}\label{equ: main j=12 f=0}
 	\begin{cases}
 		(-\nabla \cdot \sigma_j \nabla )^s u_j^{(0)}=F_j &\text{ in }\Omega, \\
 		u_j^{(0)}=0 &\text{ in }\Omega_e,
 	\end{cases}
 \end{equation}
 respectively, for $j=1,2$. Let $\mathbf{u}_j^{(f)}:=u_j^{(f)}-u_j^{(0)}\in H^s(\R^n)$, then $\mathbf{u}^{(f)}_j$ be the solution to the nonlocal elliptic equation
 \begin{equation}\label{equ: main j=12 F=0}
	\begin{cases}
		(-\nabla \cdot \sigma_j \nabla )^s \mathbf{u}_j^{(f)}=0 &\text{ in }\Omega, \\
		\mathbf{u}_j^{(f)}=f &\text{ in }\Omega_e,
	\end{cases}
\end{equation}
 for $j=1,2$. Then the well-posedness of \eqref{equ: main j=12} and condition \eqref{same DN map} imply that 
 \begin{equation}
 \begin{split}
 		\mathbf{\Lambda}_{\sigma_1}^s(f)&:= \Lambda_{\sigma_1,F_1}^s(f) -  \Lambda_{\sigma_1,F_1}^s(0) \\ 
 		&\ =\Lambda_{\sigma_2,F_2}^s(f) -  \Lambda_{\sigma_2,F_2}^s(0) :=\mathbf{\Lambda}_{\sigma_2}^s(f), \text{ for any } f\in C^\infty_c(W),
 \end{split}
 \end{equation}
 where $\mathbf{\Lambda}_{\sigma_j}^s$ stands for the DN map for \eqref{equ: main j=12 F=0}, for $j=1,2$.
 Combining with the analysis in Section \ref{subsection:local-nonlocal}, the nonlocal DN map $\mathbf{\Lambda}_{\sigma_j}^s$  determines the local DN map $\mathbf{\Lambda}_{\sigma_j}$, where $\mathbf{\Lambda}_{\sigma_j}$ is the (local) DN map of  
 \begin{equation}\label{equ: local j=12 F=0}
 	\begin{cases}
 		-\nabla \cdot \LC  \sigma_j \nabla   \mathbf{v}_j^{(f)}\RC  =0 &\text{ in }\Omega, \\
 		\mathbf{v}_j^{(f)}=\left. \int_0^\infty \wt u_j^{(f)} (x,y)\, dy\right|_{\p \Omega} &\text{ on }\p \Omega,
 	\end{cases}
 \end{equation}
 for $j=1,2$.
 Here $\wt u_j^{(f)}=\wt u_j^{(f)}(x,y)$ is a solution of the extension problem \eqref{extension problem} as $\wt \sigma=\wt \sigma_j$ and $u=u_j^{(f)}$, for $j=1,2$.

 On the other hand, by the condition \eqref{same DN map}, combined with $\sigma_1=\sigma_2=1$ in $\Omega_e$, similar proof to \cite[Theorem 1.2]{CGRU2023reduction} implies that $\wt u_1^{(f)}=\wt u_2^{(f)}$ in $\Omega_e \times (0,\infty)$. 
 This implies that 
 \begin{equation}
 	 \mathbf v_1^{(f)}(x)=\int_0^\infty y^{1-2s}\wt u_1^{(f)}(x,y) \, dy=\int_0^\infty y^{1-2s}\wt u_2^{(f)}(x,y)\, dy=\mathbf v_2^{(f)}(x) \text{ for } x\in \p \Omega.
 \end{equation}
 Therefore, the condition \eqref{same DN map} implies 
 \begin{equation}
 \mathbf{\Lambda}_{\sigma_1} \LC \left. \mathbf v_1^{(f)}\right|_{\p \Omega}\RC = \mathbf{\Lambda}_{\sigma_2}\LC \left. 	\mathbf v_2^{(f)}\right|_{\p \Omega}\RC , \text{ for any }f\in C^\infty_c(W).
 \end{equation}
 In addition, the density result \cite[Proposition 1.2]{CGRU2023reduction} shows that the Dirichlet data  $\left. \mathbf{v} _j^{(f)} \right|_{\p \Omega}$ is dense in $H^{1/2}(\p \Omega)$ for any $f\in C^\infty_c(W)$ and $j=1,2$. Thus, with $\sigma_1=\sigma_2=1$ on $\p \Omega$ at hand, the global uniqueness for scalar conductivity equation yields that $\sigma:=\sigma_1=\sigma_2$ in $\Omega$.

 \medskip
 
 {\it Step 2. Unique determination of the source.}
 
 \medskip
 
 \noindent Finally, with $\sigma=\sigma_1=\sigma_2$ in $\R^n$ at hand, then the equation \eqref{equ: main j=12} becomes 
 \begin{equation}\label{source equation}
 	\begin{cases}
 		\LC -\nabla \cdot \sigma \nabla \RC^s u_j =F_j  &\text{ in }\Omega, \\
 		u_j =f &\text{ in }\Omega_e,
 	\end{cases}
 \end{equation}
 for $j=1,2$. The condition \eqref{same DN map} implies that 
 \begin{equation}
 	\LC -\nabla \cdot \sigma \nabla \RC^s \LC u_1-u_2 \RC =u_1 -u_2 =0 \text{ in }W,
 \end{equation}
 then Proposition \ref{Prop: UCP} implies $u_1=u_2$ in $\R^n$, which implies $F_1=F_2$ in $\Omega$ by using \eqref{source equation}. This proves the assertion.
\end{proof}

		\begin{remark}
		Unlike the local case, if we want to determine coefficients and source simultaneously, we could consider  semilinear equations to break the gauge. On the other hand, due to the remarkable UCP for nonlocal equations, one can expect the uniqueness result holds for nonlocal models. In addition, one can also consider a similar problem in a nonlocal parabolic equation $\LC \p _t -\nabla \cdot \sigma \nabla \RC^s u =G(x,t)$ for $0<s<1$, to determine both $\sigma$ and $G$ in a space time cylindrical domain.
	\end{remark}

	\bigskip

	\noindent\textbf{Acknowledgments.} 
	Y.-H. Lin is partially supported by the National Science and Technology Council (NSTC) Taiwan, under the projects 111-2628-M-A49-002 and 112-2628-M-A49-003. Y.-H. Lin is also a Humboldt research fellowship for experienced researcher.

	\bibliography{refs} 
	
	\bibliographystyle{alpha}

\end{document}